\newcommand{\longdownarrow}{{\mbox{\rotatebox[origin=c]{-90}{$\longrightarrow$}}}}
\newcommand{\tmem}[1]{{\em #1\/}}
\newcommand{\tmop}[1]{\ensuremath{\operatorname{#1}}}
\newcommand{\tmstrong}[1]{\textbf{#1}}
\newenvironment{proof}{\noindent\textbf{Proof\ }}{\hspace*{\fill}$\Box$\medskip}
\newtheorem{lemma}{Lemma}
{\theorembodyfont{\rmfamily}\newtheorem{question}{Question}}
\newtheorem{theorem}{Theorem}
\newcommand{\tmfloatcontents}{}
\newlength{\tmfloatwidth}
\newcommand{\tmfloat}[5]{
  \renewcommand{\tmfloatcontents}{#4}
  \setlength{\tmfloatwidth}{\widthof{\tmfloatcontents}+1in}
  \ifthenelse{\equal{#2}{small}}
    {\setlength{\tmfloatwidth}{0.45\linewidth}}
    {\setlength{\tmfloatwidth}{\linewidth}}
  \begin{minipage}[#1]{\tmfloatwidth}
    \begin{center}
      \tmfloatcontents
      \captionof{#3}{#5}
    \end{center}
  \end{minipage}}
\begin{document}

\title{Manifolds with the fixed point property and their squares}

\author{Slawomir Kwasik\footnote{The first author was supported by the Simons Foundation Grant 281810 2010 Mathematics Subject Classification. Primary 55M20, 55M35; Secondary 57R18}  and Fang Sun}

\maketitle

\begin{abstract}
  The Cartesian squares (powers) of manifolds with the fixed point property
  (f.p.p.) are considered. Examples of manifolds with the f.p.p. are
  constructed whose symmetric squares fail to have the f.p.p..
\end{abstract}

A topological space X has the fixed point property (f.p.p.) if for every
continuous map $f : X \rightarrow X$ there exists a fixed point, that is, a
point $x \in X$ such that $f (x) = x$. There are plenty of examples of (nice)
spaces which fail to have the f.p.p. and there are examples of spaces with the
f.p.p..

The celebrated Theorem of Brouwer (cf. [7]) asserts that the n-dimensional cube
$I^n$ has the f.p.p.. On the other hand the n-dimensional sphere $S^n$ fails
to have the f.p.p..

The especially important role in the Fixed Point Theory is played by the
Lefschetz Fixed Point Theorem (cf. [5]). To be more specific:

Let X be a nice space , say a compact ANR (this includes finite CW-complexs
and compact topological manifolds). Let $\Lambda$ be a field. A map $f : X
\rightarrow X$ induces a homomorphism (linear tranformation)
\[ f_{\ast i} : H_i (X, \Lambda) \rightarrow H_i (X, \Lambda), \quad i = 0, 1,
   2, \cdots \]
The Lefschetz number $L (f, \Lambda)$ of a map $f : X \rightarrow X$ is
defined as $L (f, \Lambda) = \underset{i}{\Sigma} (- 1)^i \tmop{tr} f_{\ast
i}$ where $\tmop{tr} f_{\ast i}$ is the trace of $f_{\ast i}$.

\begin{theorem}
  (Lefschetz Fixed Point Theorem). Let $f : X \rightarrow X$ be a map. If $L
  (f, \Lambda) \neq 0$ then $f$ has a fixed point. 
\end{theorem}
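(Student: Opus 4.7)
The plan is to prove the contrapositive: assuming $f : X \to X$ has no fixed point, I will show $L(f, \Lambda) = 0$. The two tools I need are (i) a passage from the compact ANR $X$ to a finite simplicial complex on which simplicial approximation is available, and (ii) the Hopf trace formula, which computes the Lefschetz number at the chain level rather than at the homology level.

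First, I would use that $X$ is a compact ANR: embed $X$ in some $\mathbb{R}^N$ and choose a compact polyhedral neighborhood $P$ together with a retraction $r : P \to X$. Replacing $f$ by $i \circ f \circ r : P \to P$ (where $i$ is the inclusion) gives a self-map of a finite polyhedron whose Lefschetz number equals $L(f,\Lambda)$ and which has a fixed point iff $f$ does. So without loss of generality $X$ is a finite simplicial complex.

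Next, by compactness $\delta := \inf_{x \in X} d(x, f(x)) > 0$. Choose successive barycentric subdivisions $K$ of $X$ so that the mesh is smaller than $\delta/3$ and so that the simplicial approximation theorem produces a simplicial map $g : K' \to K$ (for some further subdivision $K'$) homotopic to $f$ with $d(g(x), f(x)) < \delta/3$ for all $x$. Then for every simplex $\sigma$ of $K'$, the image $g(\sigma)$ lies in a simplex disjoint from $\sigma$, because any common point would force $d(x, f(x)) < 2\delta/3$ for some $x \in \sigma$, contradicting the choice of $\delta$.

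Finally, I invoke the Hopf trace formula: for a simplicial self-map $g$ of a finite complex,
\[
L(g, \Lambda) \;=\; \sum_i (-1)^i \operatorname{tr}\bigl(g_{\#,i} : C_i(K';\Lambda) \to C_i(K';\Lambda)\bigr).
\]
Since $g$ sends each oriented simplex to a chain supported on simplices disjoint from it, every diagonal entry of every $g_{\#,i}$ vanishes, so each trace is $0$ and $L(g,\Lambda) = 0$. Because $g \simeq f$, homotopy invariance of the induced maps on homology gives $L(f, \Lambda) = L(g, \Lambda) = 0$, completing the contrapositive.

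The main obstacle I expect is the first reduction step: the hypothesis is only that $X$ is a topological space (in the statement), so one really needs the ANR assumption and the retract argument to transfer the situation to a polyhedron while preserving both the Lefschetz number and the (non)existence of a fixed point; the rest of the argument (simplicial approximation plus the Hopf trace formula) is comparatively routine once that reduction is in place.
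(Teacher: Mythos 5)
The paper does not actually prove this statement: Theorem 1 is quoted as a classical result with a pointer to Brown's book [5], and the rest of the paper only applies it. So there is no in-paper argument to compare against; what you have written is a sketch of the standard proof for compact ANRs --- retract onto a finite polyhedron to reduce to the simplicial case, then simplicial approximation plus the Hopf trace formula --- which is essentially the proof in the cited reference. Your reduction step is handled correctly: $r \circ i = \mathrm{id}_X$ and cyclic invariance of the trace give $L(i \circ f \circ r, \Lambda) = L(f, \Lambda)$, and the fixed point sets correspond. One technical wrinkle needs repair before the last step is ``routine'': your simplicial approximation $g$ is a map $K' \to K$, so $g_{\#,i}$ goes from $C_i(K';\Lambda)$ to $C_i(K;\Lambda)$ and has no trace as written. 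You must compose with the subdivision chain operator $\mathrm{sd} : C_{*}(K;\Lambda) \to C_{*}(K';\Lambda)$ (which induces the identity on homology, so the composite still induces $f_{*}$) and apply the Hopf trace formula to the endomorphism $\mathrm{sd} \circ g_{\#}$ of $C_{*}(K';\Lambda)$. The diagonal entries of this composite still vanish, because the subdivision of a simplex $\tau$ of $K$ is supported on simplices of $K'$ contained in $\tau$, hence disjoint from $\sigma$ whenever $g(\sigma) \subset \tau$ and $\tau \cap \sigma = \emptyset$. With that insertion your argument is complete and matches the classical proof the paper is citing.
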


Now since $L (f, \mathbbm{Q}) = 1$ for every continuous $f : I^n \rightarrow
I^n$ and the field of rational numbers $\mathbbm{Q}$, then the theorem of
Brower is a very special case of the Lefschetz Fixed Point Theorem.

There are more direct proofs of the Brower Fixed Point Theorem, but all of
them are surprisingly demanding given its elementary formulation.

A basic calculus argument shows that the interval I has the f.p.p. and thus
the most tempting attempt of proving the theorem of Brouwer would be
mathematical induction.

This is in turn is directly related to the general question raised by
Kuratowski in 1930 (cf. [12]).

\begin{question}
  Suppose $X, Y$ are locally connected and compact spaces with the f.p.p.,
  does the Cartesian product $X \times Y$ have the f.p.p.? 
\end{question}

It turns out that the answer to the above question is NO. The case of
polyhedra was treated by W. Lopez in [13] and the construction of
corresponding example is far from simple.

More refined example of closed manifolds $\mathcal{M}, \mathcal{N}$ with the
f.p.p. such that $\mathcal{M} \times \mathcal{N}$ admits a fixed point free
map was provided by S. Husseini in [11]. The construction in [11] is quite
involved and the technical difficulties are very substantial. In particular
the crucial fact which makes the construction in [11] to work is that
$\mathcal{M} \neq \mathcal{N}$.

This led to the following question which is considered to be one of the most
important open problems in the classical Fixed Point Theory (cf. [6]).

\begin{question}
  Does there exist a closed manifold $\mathcal{M}$ with the f.p.p. such that
  its Cartesian square $\mathcal{M}^2 =\mathcal{M} \times \mathcal{M}$ fails
  to have the f.p.p.?
\end{question}

The main purpose of this note is to rekindle the interest in the above
question. Even though at present we are not able to answer this question we
show that in the presence of an additional symmetry the answer is positive.

Namely, let $X$ be a topological space. The quotient space $X (n) = X^n /
S_n$, where the symmetric group $S_n$ acts on $X^n = X \times \cdots \times X$
by coordinate permutation, is called the $n$-th symmetric product of $X$. In
particular the symmetric square $X (2)$ is given by $X (2) = (X \times X)
/\mathbbm{Z}_2$. The symmetric product plays an important role in the algebraic
and geometric topology (cf. [1], [3], [7], [8]) as well as in algebraic
geometry (cf. [1]).

If $\mathcal{M}$ is a $k$-dimensional closed smooth manifold, then for $k
\leqslant 2$, $\mathcal{M}(n)$ is a manifold (possibly with a boundary). For
$k > 2$, $\mathcal{M}(n)$ is not a manifold but it is a compact polyhedron.

Here are some examples (cf. [1], [14]):

For $\mathcal{M}=\mathbbm{R}P^2$, $\mathcal{M}(n) =\mathbbm{R}P^{2 n}$.

For $\mathcal{M}= S^2$, $\mathcal{M}(n) =\mathbbm{C}P^n$.

For $\mathcal{M}= S^1$, $\mathcal{M}(n)$ is the total space of the
non-orientable $D^{n - 1}$ disk bundle over $S^1$.

\

The main result of this note is the following:

\begin{theorem}
  Let $\mathcal{M}=\mathbbm{R}P^4 \#\mathbbm{R}P^4 \#\mathbbm{R}P^4$. Then
  $\mathcal{M}$ has the f.p.p. while $\mathcal{M}(2)$ admits a fixed point
  free map. 
\end{theorem}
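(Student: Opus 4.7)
The plan is to treat the two halves of the theorem separately, preceded by a rational homology calculation of $\mathcal{M}$. Iterated Mayer--Vietoris on the decomposition of $\mathcal{M}$ into three copies of $\mathbbm{R}P^4 \setminus D^4 \simeq \mathbbm{R}P^3$ glued along $S^3$'s, using that each inclusion $S^3 \hookrightarrow \mathbbm{R}P^3$ is the degree-$2$ double cover, gives $H_0(\mathcal{M};\mathbbm{Q}) = \mathbbm{Q}$, $H_3(\mathcal{M};\mathbbm{Q}) = \mathbbm{Q}^2$, and $H_i(\mathcal{M};\mathbbm{Q}) = 0$ otherwise (consistent with $\chi(\mathcal{M}) = -1$). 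In particular, the easy rational Lefschetz argument that yields the f.p.p.\ for $\mathbbm{R}P^{2k}$ does not apply directly to $\mathcal{M}$, since a self-map $f$ of $\mathcal{M}$ can very well satisfy $\mathrm{tr}(f_{\ast 3}) = 1$ and hence $L(f,\mathbbm{Q}) = 0$.

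For the f.p.p.\ of $\mathcal{M}$, I would therefore invoke Nielsen fixed-point theory, since Wecken's theorem applies in dimension $4$. Every self-map $f: \mathcal{M} \to \mathcal{M}$ induces an endomorphism of $\pi_1(\mathcal{M}) = \mathbbm{Z}_2 \ast \mathbbm{Z}_2 \ast \mathbbm{Z}_2$ sending each of the three order-$2$ generators to $e$ or to a conjugate of a generator (the only torsion elements of this free product). Combined with twisted Poincar\'e duality $H_3(\mathcal{M};\mathbbm{Q}) \cong H^1(\pi_1;\mathbbm{Q}^w)$ (with $\mathbbm{Q}^w$ the orientation local system), this combinatorial restriction permits a complete case analysis of the possible $f_\ast$ and of the indices of the resulting Reidemeister/Nielsen classes, showing that at least one class always has non-zero index and hence $N(f) \geq 1$.

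For the fixed-point-free self-map of $\mathcal{M}(2)$, the pivotal observation is that any $F: \mathcal{M}(2) \to \mathcal{M}(2)$ that lifts $\mathbbm{Z}_2$-equivariantly to $\tilde F: \mathcal{M}^2 \to \mathcal{M}^2$ has $\tilde F|_\Delta$ of the form $(x,x) \mapsto (g(x),g(x))$ for a self-map $g$ of $\mathcal{M}$; by Part I, $g$ has a fixed point $x_0$, producing the diagonal fixed point $\{x_0\}$ of $F$. So the desired map must fail to lift. I would construct it in the form $F(\{x,y\}) = \{\phi_1(\{x,y\}), \phi_2(\{x,y\})\}$ for two maps $\phi_i: \mathcal{M}(2) \to \mathcal{M}$. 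Two conditions must be arranged: on the diagonal, the two self-maps $x \mapsto \phi_i(\{x,x\})$ of $\mathcal{M}$ must have \emph{disjoint} fixed-point sets (each is non-empty by Part I, but they should not share a point); off the diagonal, $(\phi_1,\phi_2)(\{x,y\})$ must avoid both $(x,y)$ and $(y,x)$ in $\mathcal{M}^2$. The $S_3$-symmetry permuting the three $\mathbbm{R}P^4$ summands of $\mathcal{M}$, together with the identification $\mathbbm{R}P^4 = \mathbbm{R}P^2(2)$ (which provides a ``halving'' inside each summand), supplies enough flexibility to write $\phi_1,\phi_2$ down concretely.

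The main obstacle is the off-diagonal condition: this is a global coincidence-avoidance condition on the $8$-dimensional polyhedron $\mathcal{M}(2)$, not a pointwise one, and controlling it everywhere requires a careful geometric description of the singular stratification of $\mathcal{M}(2)$ along the image of the diagonal and a verification that the permutation-of-summands construction meets these constraints. The choice of \emph{three} summands is decisive: with only two, the $S_3$-symmetry collapses to $\mathbbm{Z}_2$ and offers no more flexibility than an equivariant lift would, which by the first observation above cannot produce a fixed-point-free map.
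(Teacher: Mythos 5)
Both halves of your proposal stop short of a proof, and in each case the missing step is exactly where the paper's argument lives. For the f.p.p.\ of $\mathcal{M}$: you are right that the rational Lefschetz number can vanish here, but your proposed remedy --- a Nielsen-theoretic case analysis over endomorphisms of $\mathbbm{Z}_2 \ast \mathbbm{Z}_2 \ast \mathbbm{Z}_2$ --- is only asserted, not carried out, and computing the indices of the resulting fixed point classes would need homological input you never specify. The paper's remedy is to change coefficients, not theories: it computes the ring $H^{\ast}(\mathcal{M};\mathbbm{Z}_2) \cong \mathbbm{Z}_2[x_1,x_2,x_3]/\langle \{x_i^5\}, \{x_i^4+x_j^4\}, \{x_ix_j, i\neq j\}\rangle$ with $|x_i|=1$, and observes that for any $f$ with $f^{\ast}x_i = \sum_j a_{ij}x_j$ the relations give $\mathrm{tr}(f^{\ast}|_{H^m}) = a_{11}^m+a_{22}^m+a_{33}^m$ for $1\leq m\leq 3$, while in degree $4$ the single generator forces $a_{11}^4=a_{22}^4=a_{33}^4$ and hence $a_{11}=a_{22}=a_{33}$; since $a^2=a$ in $\mathbbm{Z}_2$, everything collapses to $L(f;\mathbbm{Z}_2)=1$ identically, and the mod $2$ Lefschetz theorem finishes. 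This is a complete two-line argument once the ring is known, and it is the step your sketch would have to replace.

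For the fixed-point-free map on $\mathcal{M}(2)$ the gap is more serious: you explicitly leave the ``off-diagonal coincidence-avoidance'' condition unresolved and call it the main obstacle, so no map is actually produced, and there is no evidence the permutation-of-summands construction can be pushed through the singular stratification. The paper does not construct a map by hand at all. It proves $2\chi(\mathcal{M}(2)) = \chi(\mathcal{M}) + \chi(\mathcal{M}\times\mathcal{M}) = \chi(\mathcal{M})(1+\chi(\mathcal{M})) = 0$ using $\chi(\mathcal{M})=-1$, notes that $\mathcal{M}(2)$ is an $8$-dimensional rational homology manifold and hence a polyhedron of type $\mathcal{W}$ in the sense of Brown's book, and then invokes the converse of the Lefschetz fixed point theorem: such a polyhedron with vanishing Euler characteristic admits a fixed-point-free deformation. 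This also corrects your closing remark: the reason three summands are decisive is not the $S_3$-symmetry but arithmetic --- $\chi(\#^n\mathbbm{R}P^4)=2-n$, and $n=3$ is what makes $\chi(\mathcal{M})(1+\chi(\mathcal{M}))$ vanish.
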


Here $\#$ stands for the connected sum operation.

\begin{proof}
  Our first observation is about the cohomology ring structure on $H^{\ast}
  (\mathcal{M}; \mathbbm{Z}_2)$. Namely, relatively simple but somewhat tedious
  considerations involving the Mayer-Vietoris exact sequence and the well
  known ring structure of $H^{\ast} (\mathbbm{R}P^4 ; \mathbbm{Z}_2)$ show
  that $H^{\ast} (\mathcal{M}; \mathbbm{Z}_2)$ is (ring) isomorphic with the
  ring
  \[ \mathbbm{Z}_2 [x_1, x_2, x_3] / \langle \{ x_i^5 |i = 1, 2, 3 \}, \{
     x_i^4 + x_j^4 |i \neq j \}, \{ x_i x_j |i \neq j \} \rangle \]
  with $|x_i | = 1$. Given the crucial role of the ring structure on $H^{\ast}
  (\mathcal{M}; \mathbbm{Z}_2)$ in our considerations, we include an appendix
  which contains the necessary computational details.
  
  In particular the cohomology of \ensuremath{\mathcal{M}} has base $\{ 1,
  x_1^n, x_2^n, x_3^n (1 \leqslant n \leqslant 3), x_1^4 \} .$ Also this
  implies that $\chi (\mathcal{M}) = - 1$.
  
  We show $L (f ; \mathbbm{Z}_2) = 1$ for each continuous map $f : \mathcal{M}
  \rightarrow \mathcal{M}$.
  
  To see this let $f^{\ast} \left(\begin{array}{c}
    x_1\\
    x_2\\
    x_3
  \end{array}\right) = A \cdot \left(\begin{array}{c}
    x_1\\
    x_2\\
    x_3
  \end{array}\right)$ where $A$ is a $3 \times 3$ matrix with entries $a_{i
  j}$, $i, j = 1, 2, 3$.
  
  The trace of $f^{\ast}$ is given as follows
  
  \begin{table}[h]
    \begin{tabular}{lll}
      Dimension &  & Trace\\
      0 &  & 1\\
      1 &  & $a_{11} + a_{22} + a_{33}$\\
      2 &  & $a_{11}^2 + a_{22}^2 + a_{33}^2$\\
      3 &  & $a_{11}^3 + a_{22}^3 + a_{33}^3$\\
      4 &  & $a_{11}^4 = a_{22}^4 = a_{33}^4$
    \end{tabular}
    \caption{}
  \end{table}
  
  Now with the $\mathbbm{Z}_2$-coefficients $a_{i j}^2 = a_{i j}$ and hence
  the equation \ $a_{11}^4 = a_{22}^4 = a_{33}^4$ implies $a_{11} = a_{22} =
  a_{33}$. This gives $L (f, \mathbbm{Z}_2) = 1$.
  
  Next we show that $\mathcal{M} (2)$ admits a fixed point free map.
  
  We start with the following observation:
  
  \begin{lemma}
    The Euler characteristic of $\mathcal{M} (2)$ is trivial, i.e. $\chi
    (\mathcal{M} (2)) = 0.$
  \end{lemma}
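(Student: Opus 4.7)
The plan is to compute $\chi(\mathcal{M}(2))$ directly from the formula
\[ \chi(X(2)) \;=\; \frac{\chi(X)^2 + \chi(X)}{2}, \]
which holds for any finite CW complex $X$ (and in particular for our closed $4$-manifold $\mathcal{M}$). Once this formula is in hand, the lemma is immediate: substituting the value $\chi(\mathcal{M}) = -1$ recorded just above gives $\chi(\mathcal{M}(2)) = \tfrac{1 + (-1)}{2} = 0$.

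To establish the formula I would argue as follows. Let $\tau : \mathcal{M}^2 \to \mathcal{M}^2$ be the involution swapping factors. Its fixed-point set is exactly the diagonal $\Delta \cong \mathcal{M}$, and on the open complement $U := \mathcal{M}^2 \setminus \Delta$ the group $\mathbb{Z}_2$ acts freely. Since $\mathcal{M}^2$ is a compact CW complex (in fact a closed smooth manifold) and $\Delta$ is a closed subcomplex, additivity of the (compactly supported) Euler characteristic yields
\[ \chi(\mathcal{M}^2) \;=\; \chi(\Delta) + \chi_c(U), \qquad \chi(\mathcal{M}(2)) \;=\; \chi(\Delta) + \chi_c(U/\mathbb{Z}_2). \]
The free action of $\mathbb{Z}_2$ on $U$ makes $U \to U/\mathbb{Z}_2$ a double cover, so $\chi_c(U/\mathbb{Z}_2) = \tfrac{1}{2}\chi_c(U) = \tfrac{1}{2}(\chi(\mathcal{M})^2 - \chi(\mathcal{M}))$. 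Combining with $\chi(\Delta) = \chi(\mathcal{M})$ produces the displayed formula.

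Substituting $\chi(\mathcal{M}) = -1$ completes the proof.

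No real obstacle arises here; the only point that requires a little care is the justification of additivity for the non-compact piece $U$, which is handled by passing to compactly supported Euler characteristics (these agree with the ordinary ones on compact CW complexes and are additive on open/closed decompositions). The statement $\chi(\mathcal{M}) = -1$, used as input, is already established from the explicit $\mathbb{Z}_2$-cohomology basis $\{1,\,x_i^n\ (i=1,2,3;\,1 \leqslant n \leqslant 3),\,x_1^4\}$ listed above, which has $1+9+1 = 11$ elements with alternating-sign count $1 - 3 + 3 - 3 + 1 = -1$.
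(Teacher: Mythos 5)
Your proof is correct and follows essentially the same route as the paper's: both decompose $\mathcal{M}^2$ into the diagonal $\Delta$ and its complement, on which the swap involution acts freely, and conclude that $2\chi(\mathcal{M}(2)) = \chi(\mathcal{M}) + \chi(\mathcal{M})^2 = 0$ since $\chi(\mathcal{M}) = -1$. The only difference is bookkeeping: the paper realizes the decomposition via an equivariant cellular chain complex (after citing the general formula in Bredon), whereas you phrase it with compactly supported Euler characteristics on the open complement --- to fully justify the additivity and the halving under the free double cover one would in practice pass to an equivariant triangulation anyway, which is exactly what the paper does.
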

  
  \begin{flushleft}
  {\tmstrong{Proof of Lemma 1}}: The above claim follows from a very general
  formula cf. [4], Theorem 7.1 on p.145.
  \end{flushleft}
  
  For the completeness of our paper we include a different, shorter and
  self-contained argument. Namely:
  
  The $\mathbbm{Z}_2$-action on $\mathcal{M} \times \mathcal{M}$ is obviously
  smooth and in particular (cf. [10]) simplicial, and hence cellular for some
  CW structure on $\mathcal{M} \times \mathcal{M}$.
  
  Consider the equivariant cellular chain complex $C_{\ast} (\mathcal{M}
  \times \mathcal{M})$. Let $\Delta \subset \mathcal{M} \times \mathcal{M}$ be
  the diagonal, then $\Delta = (\mathcal{M} \times
  \mathcal{M})^{\mathbbm{Z}_2}$ is the fixed point set of the
  $\mathbbm{Z}_2$-action. Thus we have $C_{\ast} (\mathcal{M} \times
  \mathcal{M}) \cong C_{\ast} (\Delta) \oplus \tilde{C} (\mathcal{M} \times
  \mathcal{M})$ where $\widetilde{C_{\ast}} (\mathcal{M} \times \mathcal{M})$
  is an $\mathbbm{Z}_2$-equivariant chain complex generated by cells in
  $\mathcal{M} \times \mathcal{M}$ which are not in $\Delta$. Let $p :
  \mathcal{M} \times \mathcal{M} \rightarrow (\mathcal{M} \times \mathcal{M})
  /\mathbbm{Z}_2 =\mathcal{M} (2)$ be the natural projection on the orbit
  space. Then we have a chain map $p_{\#}$ and the diagram
  
  \begin{table}[h]
    \begin{tabular}{lllll}
      $C_{\ast} (\mathcal{M} \times \mathcal{M})$ & $\cong$ & $C_{\ast}
      (\Delta)$ & $\oplus$ & $\tilde{C} (\mathcal{M} \times \mathcal{M})$\\
      \qquad$\longdownarrow p_{\#}$ &  & \quad$\longdownarrow (p_1)_{\#}$ &  &
      \qquad$\longdownarrow (p_1)_{\#}$\\
      $C_{\ast} (\mathcal{M} (2))$ & $\cong$ & $C_{\ast} (\Delta)$ & $\oplus$
      & $\overline{\tilde{C}} (\mathcal{M} \times \mathcal{M})$
    \end{tabular}
    \caption{}
  \end{table}
  
  Here $\overline{\tilde{C}} (\mathcal{M} \times \mathcal{M})$ is the quotient
  of $\tilde{C}_{\ast} (\mathcal{M} \times \mathcal{M})$ and $(p_1)_{\#}$ are
  corresponding projections.
  
  Now on the chain complex level
  \[ \chi (C_{\ast} (\mathcal{M} \times \mathcal{M})) = \chi (C_{\ast}
     (\Delta)) + \chi (\widetilde{C_{\ast}} (\mathcal{M} \times \mathcal{M}))
  \]
  and analogously
  \[ \chi (C_{\ast} (\mathcal{M} (2))) = \chi (C_{\ast} (\Delta)) + \chi
     (\overline{\tilde{C}}_{\ast} (\mathcal{M} \times \mathcal{M})) \]
  Note that $\chi (\widetilde{C_{\ast}} (\mathcal{M} \times \mathcal{M})) = 2
  \chi (\overline{\tilde{C}}_{\ast} (\mathcal{M} \times \mathcal{M}))$, and
  hence on the level of topological spaces one obtains
  \[ 2 \chi (\mathcal{M} (2)) = \chi (\mathcal{M}) + \chi (\mathcal{M} \times
     \mathcal{M}) = \chi (\mathcal{M}) (1 + \chi (\mathcal{M})) = 0 \]
  and hence $\chi (\mathcal{M} (2)) = 0$ as claimed.
  
  Finally the symmetric square $\mathcal{M} (2)$ is obviously a simplicial
  complex of dimension 8.
  
  It is a rational homology manifold (cf. [2]). In particular it means that
  for each vertex $v \in \mathcal{M} (2)$ the link $\tmop{Ln} (v) = \partial |
  \tmop{St} (v) |$ has the rational homology of $S^7$, here $\tmop{St} (v)$ is
  the star of $v$. This implies that $\mathcal{M} (2)$ is a polyhedron of type
  $\mathcal{W}$ in the sense of [5] p.143, with $\chi (\mathcal{M} (2)) = 0$.
  
  Consequently by the Theorem 1 (the converse of the Lefschetz Deformation
  Theorem) in [5] p.143, $\mathcal{M} (2)$ admits a fixed point free
  deformation.
\end{proof}

\begin{flushleft}
{\LARGE \textbf{Remarks and comments}}
\end{flushleft}

The example of closed manifolds $\mathcal{M}, \mathcal{N}$ with the f.p.p. for
which $\mathcal{M} \times \mathcal{N}$ fails to have the f.p.p. presented in
[11] is surprisingly complicated. One attempt to construct ``simple'' examples
of this sort could be to consider products of basic manifolds with the f.p.p..

These basic examples are: $\mathbbm{R}P^{2 n}, \mathbbm{C}P^{2 n}, n = 1, 2,
\cdots$ and $\mathbbm{H}P^n, n = 2, 3, 4, \cdots$. i.e., the corresponding
real, complex and quaternionic projective spaces.

It turns out that mixing different projective spaces, i.e., forming

(a)$\mathbbm{R}P^{2 m} \times \mathbbm{C}P^{2 n}$

(b)$\mathbbm{R}P^{2 m} \times \mathbbm{H}P^n$

(c)$\mathbbm{C}P^{2 m} \times \mathbbm{H}P^n$

one ends up with manifold with the f.p.p. cf. [9], Theorem 4.7.

It appears that a more involved argument would show that the Cartesian powers
of these manifolds have the f.p.p..

The case of $\mathbbm{R}P^{2 n}$ is simple (use the Lefschetz Fixed Point
Theorem with the rational coefficients). The considerations involving
$\mathbbm{C}P^{2 n}$ and $\mathbbm{H}P^n$ are more involved. As an example we
check the following crucial case.

\begin{theorem}
  The Cartesian power $(\mathbbm{C}P^2)^n =\mathbbm{C}P^2 \times
  \mathbbm{C}P^2 \times \cdots \times \mathbbm{C}P^2$ has the f.p.p..
\end{theorem}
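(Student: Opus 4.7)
The plan is to apply the Lefschetz Fixed Point Theorem with rational coefficients. I would begin by computing the cohomology ring: by Künneth, $H^{\ast}((\mathbb{C}P^2)^n; \mathbb{Q}) \cong \mathbb{Q}[x_1, \ldots, x_n]/\langle x_i^3 : i = 1, \ldots, n\rangle$ with each $|x_i| = 2$. Given a continuous self-map $f$, write $f^{\ast}(x_i) = \sum_j a_{ij} x_j$ and exploit the relation $f^{\ast}(x_i)^3 = 0$ in $H^6$. Expanding the cube and reading off the (nonzero) monomial basis of $H^6$, the coefficients $a_{ij}^2 a_{ik}$ with $j\neq k$ must all vanish. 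This forces at most one $a_{ij}$ to be nonzero for each $i$, so $f^{\ast}(x_i) = b_i x_{\sigma(i)}$ for some $b_i \in \mathbb{Q}$ and index $\sigma(i)$ (with $b_i = 0$ interpreted as $f^{\ast}(x_i) = 0$).

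Since all cohomology lies in even degrees, $L(f,\mathbb{Q})$ is just the total trace of $f^{\ast}$ on $H^{\ast}$. Using the monomial basis $\{x_1^{e_1}\cdots x_n^{e_n} : e_i \in \{0,1,2\}\}$, I would compute
\[
f^{\ast}(x_1^{e_1}\cdots x_n^{e_n}) \;=\; \Bigl(\prod_i b_i^{e_i}\Bigr)\cdot \prod_j x_j^{f_j}, \qquad f_j = \sum_{i:\,\sigma(i)=j} e_i,
\]
so the diagonal entry at $e$ is $\prod_i b_i^{e_i}$ precisely when $f_j = e_j$ for every $j$ (and vanishes when some $b_i=0$ with $e_i>0$, or when some $f_j \neq e_j$).

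The crucial step, and the one I expect to require the most care, is showing that only exponent vectors supported on the ``periodic'' subset $J\subset\{1,\ldots,n\}$ (the union of cycles of $\sigma$ restricted to $I=\{i : b_i \neq 0\}$) can contribute. Writing $T=\{i : e_i>0\}$, the constraint $e_j = \sum_{\sigma(i)=j} e_i$ applied to $j\in T$ forces $T\subset \sigma(T)$; combined with $|\sigma(T)|\leq |T|$, this gives $\sigma(T)=T$, so $\sigma|_T$ is a bijection on a finite set and hence $T\subset J$. On $J$, the admissible exponent vectors are exactly the $\sigma|_J$-invariant $\{0,1,2\}$-valued assignments, which factor by cycles of $\sigma|_J$.

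Summing the contributions cycle by cycle yields the clean formula
\[
L(f,\mathbb{Q}) \;=\; \prod_{\text{cycle } C \text{ of } \sigma|_J} \bigl(1 + B_C + B_C^2\bigr), \qquad B_C = \prod_{i\in C} b_i \in \mathbb{Q}\setminus\{0\},
\]
(an empty product when $J=\emptyset$, giving $L(f,\mathbb{Q})=1$). Since $1+B+B^2>0$ for every real $B$, each factor is strictly positive, hence $L(f,\mathbb{Q})>0$. By the Lefschetz Fixed Point Theorem, $f$ has a fixed point, which completes the proof.
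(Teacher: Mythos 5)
Your proof is correct, but it takes a genuinely different route from the paper's. The paper computes $L(f;\mathbbm{Z}_2)$ with mod $2$ coefficients: it places no constraint on the matrix $A$, instead using $a^2=a$ in $\mathbbm{Z}_2$ to show the squares $x_i^2$ transform by the same matrix as the $x_i$, then grouping the monomial basis by shape $(k,l)$ and proving the partial traces satisfy $T_{0,0}=1$, $T_{k,l}=T_{l,k}$ and $T_{k,k}=0$ for $k\geqslant 1$, so everything cancels mod $2$ and $L(f;\mathbbm{Z}_2)=1$. You work rationally and extract rigidity from the ring relation $x_i^3=0$: since $3\neq 0$ in $\mathbbm{Q}$, the vanishing of the coefficient of $x_j^2x_k$ in $f^{\ast}(x_i)^3$ forces $f^{\ast}(x_i)=b_i x_{\sigma(i)}$, after which the trace localizes on the cycles of $\sigma$ and factors as $\prod_C\bigl(1+B_C+B_C^2\bigr)>0$; your combinatorial step ($\sigma(T)=T$, hence $e$ constant on cycles) is sound. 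Your conclusion $L(f,\mathbbm{Q})>0$ is strictly stronger and more explicit than the paper's mod $2$ statement, at the cost of relying on characteristic $0$ for the rigidity of $A$, whereas the paper's cancellation works for an arbitrary matrix over $\mathbbm{Z}_2$. It is worth noting that the paper's closing remarks describe the rational-coefficient route as suited only to $\mathbbm{R}P^{2n}$ and call the $\mathbbm{C}P^{2n}$ case ``more involved''; your argument shows the rational route does go through for $(\mathbbm{C}P^2)^n$.
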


\begin{proof}
  Let $f : (\mathbbm{C}P^2)^n \rightarrow (\mathbbm{C}P^2)^n$ be a map. We
  show that the Lefschetz number computed with the $\mathbbm{Z}_2$-coefficient
  is given by $L (f ; \mathbbm{Z}_2) = 1$.
  
  Consider the induced homomorphism
  \[ f^{\ast} : H^{\ast} ((\mathbbm{C}P^2)^n) \longrightarrow H^{\ast}
     ((\mathbbm{C}P^2)^n) \]
  By the Kunneth Formula, $H^{\ast} ((\mathbbm{C}P^2)^n)$ can be identified
  with the $n$-fold tensor product $H^{\ast} (\mathbbm{C}P^2) \otimes \cdots
  \otimes H^{\ast} (\mathbbm{C}P^2)$. Let $X_i, 1 \leqslant i \leqslant n$ be
  the generator of $H^2 ((\mathbbm{C}P^2)^n)$ corresponding to $1 \otimes
  \cdots \otimes 1 \otimes x \otimes 1 \otimes \cdots \otimes 1$, where $x$ in
  the $i$th place is a fixed generator of $H^2 (\mathbbm{C}P^2)$.
  
  Assume that $f^{\ast} \left(\begin{array}{c}
    x_1\\
    \vdots\\
    x_n
  \end{array}\right) = A \cdot \left(\begin{array}{c}
    x_1\\
    \vdots\\
    x_n
  \end{array}\right)$ for a matrix $A$ given by $A = \{ a_{i j} \}$ $1
  \leqslant i, j \leqslant n$. Let $X_{k, l} = \{ x_{i_1}^2 x_{i_2}^2 \cdots
  x_{i_k}^2 x_{j_1} \ldots x_{j_l} |i_1, \cdots, i_k, j_1, \cdots, j_k
  \tmop{are} k + l \tmop{distinct} \tmop{integers} \tmop{between} 1 \tmop{and}
  n \}$. To be more precise, $\{ i_1, \cdots, i_k \}, \{ j_1, \cdots, j_k \}$
  go through all mutually distinct $k, l$ subsets of $\{ 1, \cdots, n \}$.
  
  Then $X = \underset{l \geqslant 0}{\underset{k \geqslant 0}{\underset{k + l
  \leqslant n}{\cup}}} X_{k, l}$ is a basis for $H^{\ast}
  ((\mathbbm{C}P^2)^n)$, where $X_{0, 0}$ is the basis for $H^0
  ((\mathbbm{C}P^2)^n) =\mathbbm{Z}_2$. Now $L (f)$ is the trace of $f^{\ast}$
  with respect to $X$.
  
  Let $T_{k, l}$ be the trace of $f^{\ast}$ generated by $X_{k, l}$. Then we
  claim the following:
  
  (1)$T_{0, 0} = 1$
  
  (2)$T_{k, l} = T_{l, k}$
  
  (3)$T_{k, k} = 0$ for $k \geqslant 1$
  
  Note that these claims imply $L (f ; \mathbbm{Z}_2) = 1$ completing the
  proof of Theorem 3. With respect to the proof of (1), (2) and (3):\\
  
  \begin{flushleft}
  The claim (1) is obvious.
  \end{flushleft}
  
  \begin{flushleft}
  {\underline{\textbf{Proof of the claim (2)}}}:
  \end{flushleft}
  
  Let $t_{x_{i_1}^2 \cdots x_{i_k}^2 x_{j_1} \ldots x_{j_l}}$ be the trace
  generated by the element $x_{i_1}^2 \cdots x_{i_k}^2 x_{j_1} \ldots
  x_{j_l}$. It suffices to show that $t_{x_{i_1}^2 \cdots x_{i_k}^2 x_{j_1}
  \ldots x_{j_l}} = t_{x_{j_1}^2 \cdots x_{j_l}^2 x_{i_1} \ldots x_{i_k}}$,
  for any distinct $i_1, \cdots, i_k, j_1, \cdots, j_l$.
  
  We have
  
  $f^{\ast} (x_{i_s}) = \underset{r = 1}{\overset{n}{\Sigma}} a_{i_s r} x_r,
  1 \leqslant s \leqslant k$,
  
  $f^{\ast} (x_{j_t}) = \underset{r = 1}{\overset{n}{\Sigma}} a_{j_t r} x_r, 1
  \leqslant t \leqslant l$.
  
  Thus $f^{\ast} (x_{i_s}^2) = f^{\ast} (x_{i_s})^2 = \underset{r =
  1}{\overset{n}{\Sigma}} a_{i_s r}^2 x_r^2 = \underset{r =
  1}{\overset{n}{\Sigma}} a_{i_s r} x_r^2$. Similarly $f^{\ast} (x_{j_t}^2) =
  \underset{r = 1}{\overset{n}{\Sigma}} a_{j_t r} x_r^2$.
  
  So
  \[ f^{\ast} (x_{i_1}^2 \cdots x_{i_k}^2 x_{j_1} \ldots x_{j_l}) = \left(
     \underset{s = 1}{\overset{k}{\Pi}} \underset{r = 1}{\overset{n}{\Sigma}}
     a_{i_s r} x_r^2 \right) \cdot \left( \underset{t = 1}{\overset{l}{\Pi}}
     \underset{r = 1}{\overset{n}{\Sigma}} a_{j_t r} x_r \right) \]
  and analogously
  \[ f^{\ast} (x_{j_1}^2 \cdots x_{j_l}^2 x_{i_1} \ldots x_{i_k}) = \left(
     \underset{t = 1}{\overset{l}{\Pi}} \underset{r = 1}{\overset{n}{\Sigma}}
     a_{j_t r} x_r^2 \right) \cdot \left( \underset{s = 1}{\overset{k}{\Pi}}
     \underset{r = 1}{\overset{n}{\Sigma}} a_{i_s r} x_r \right) \]
  From this it is not difficult to see that
  \[ t_{x_{i_1}^2 \cdots x_{i_k}^2 x_{j_1} \ldots x_{j_l}} = t_{x_{j_1}^2
     \cdots x_{j_l}^2 x_{i_1} \ldots x_{i_k}} \]
  Namely, both of them are given by
  \[ \left|\begin{array}{ccc}
       a_{i_1 i_1} & \cdots & a_{i_1 i_k}\\
       \vdots &  & \vdots\\
       a_{i_k i_1} & \cdots & a_{i_k i_k}
     \end{array}\right| \cdot \left|\begin{array}{ccc}
       a_{j_1 j_1} & \cdots & a_{j_1 j_l}\\
       \vdots &  & \vdots\\
       a_{j_l j_1} & \cdots & a_{j_l j_l}
     \end{array}\right| \]

  \begin{flushleft}
  {\underline{\textbf{Proof of the claim (3)}}}: 
  \end{flushleft}
 
  We have
  \[ T_{k, k} = \underset{j_1, \cdots, j_k}{\underset{i_1, \cdots,
     i_k}{\Sigma}} t_{x_{i_1}^2 \cdots x_{i_k}^2 x_{j_1} \ldots x_{j_k}} \]
  But $t_{x_{i_1}^2 \cdots x_{i_k}^2 x_{j_1} \ldots x_{j_k}} + t_{x_{j_1}^2
  \cdots x_{j_k}^2 x_{i_1} \ldots x_{i_k}} = 2 t_{x_{i_1}^2 \cdots x_{i_k}^2
  x_{j_1} \ldots x_{j_k}} = 0$.
\end{proof}

\

\begin{flushleft}
{\tmstrong{{\LARGE Appendix}}}
\end{flushleft}

\begin{theorem}
  The cohomology ring $H^{\ast} \left( \underset{i = 1}{\overset{n}{\#}} P^{2
  k} ; \mathbbm{Z}_2 \right)$, $k \geqslant 2$ is isomorphic to $\mathbbm{Z}_2
  [x_1, \cdots, x_n] / \langle x_1^{2 k + 1}, \{ x_i^{2 k} + x_j^{2 k} |i \neq
  j \}, \{ x_i x_j |i \neq j \} \rangle$, $| x_i | = 1$
\end{theorem}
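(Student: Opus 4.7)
I would proceed by induction on $n$. The base case $n = 1$ is the classical ring structure $H^{\ast}(\mathbbm{R}P^{2k}; \mathbbm{Z}_2) \cong \mathbbm{Z}_2 [x_1] / \langle x_1^{2k+1} \rangle$; when $n = 1$ the two relation families $\{x_i^{2k}+x_j^{2k}\}$ and $\{x_i x_j\}$ in the statement are empty, so nothing else needs checking.

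For the inductive step the main geometric tool is the pinch map
\[
q \colon M_n := \underset{i=1}{\overset{n}{\#}} \mathbbm{R}P^{2k} \longrightarrow \bigvee_{i=1}^{n} \mathbbm{R}P^{2k},
\]
obtained by collapsing each of the $n-1$ separating $(2k-1)$-spheres of the connected sum to a point and sliding these points to a common basepoint. Composing $q$ with the projection $p_i$ onto the $i$-th wedge summand gives a collapse map $c_i := p_i \circ q \colon M_n \to \mathbbm{R}P^{2k}$ which has mod-$2$ degree $1$, since it is a homeomorphism from an open subset of $M_n$ onto $\mathbbm{R}P^{2k}$ minus a point. I would then define $x_i := c_i^{\ast}(x) \in H^1(M_n; \mathbbm{Z}_2)$, where $x$ is the standard generator of $H^1(\mathbbm{R}P^{2k}; \mathbbm{Z}_2)$.

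The additive structure I would extract inductively from the Mayer--Vietoris sequence applied to $M_{n+1} = (M_n \setminus D) \cup_{S^{2k-1}} (\mathbbm{R}P^{2k} \setminus D)$, using that each half is a $2k$-manifold with boundary whose mod-$2$ cohomology is concentrated in degrees $\leqslant 2k-1$; the outcome is $\dim_{\mathbbm{Z}_2} H^m (M_n; \mathbbm{Z}_2) = n$ for $1 \leqslant m \leqslant 2k-1$ and $\dim_{\mathbbm{Z}_2} H^0 = \dim_{\mathbbm{Z}_2} H^{2k} = 1$. All three relation families then fall out by naturality of cup products: $x_1^{2k+1} = c_1^{\ast}(x^{2k+1}) = 0$ since $x^{2k+1}$ already vanishes in $H^{\ast}(\mathbbm{R}P^{2k}; \mathbbm{Z}_2)$; for $i \neq j$ we have $x_i x_j = q^{\ast}(\tilde{x}_i \tilde{x}_j) = 0$ because cross cup products across distinct wedge summands vanish; and each $x_i^{2k} = c_i^{\ast}(x^{2k})$ is the mod-$2$ fundamental class of $M_n$ (as $c_i$ has mod-$2$ degree $1$), so $x_i^{2k} = x_j^{2k}$ for every $i, j$.

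To conclude I would check by dimension count that these are all the relations. A spanning set for the presented ring is $\{1\} \cup \{x_i^m : 1 \leqslant i \leqslant n,\ 1 \leqslant m \leqslant 2k-1\} \cup \{x_1^{2k}\}$, of total $\mathbbm{Z}_2$-dimension $2 + n(2k-1)$, matching $H^{\ast}(M_n; \mathbbm{Z}_2)$ degree by degree, so the tautological surjection from the presented ring onto the cohomology ring is forced to be an isomorphism. The step I expect to require the most care is establishing that the classes $x_1^m, \ldots, x_n^m$ really are linearly independent in $H^m(M_n; \mathbbm{Z}_2)$ for each $1 \leqslant m \leqslant 2k-1$. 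For this I would pull back along the inclusions $\iota_j \colon \mathbbm{R}P^{2k} \setminus D \hookrightarrow M_n$ of the individual connect-summands: since $c_i \circ \iota_j$ is null-homotopic for $i \neq j$ and, for $i = j$, induces the standard nontrivial map $H^m(\mathbbm{R}P^{2k}; \mathbbm{Z}_2) \to H^m(\mathbbm{R}P^{2k-1}; \mathbbm{Z}_2) = \mathbbm{Z}_2$, the combined restriction $\bigoplus_j \iota_j^{\ast}$ sends $x_i^m$ to the $i$-th standard basis vector of $\mathbbm{Z}_2^n$, which settles the required independence and closes the induction.
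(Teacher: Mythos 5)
Your proposal is correct, and its overall skeleton (induction on $n$, collapse maps $c_i$ onto the individual summands to define $x_i$, Mayer--Vietoris for the additive structure, and restriction to the punctured summands to see that $x_1^m,\ldots,x_n^m$ are independent) runs parallel to the paper's. Where you genuinely diverge is in the multiplicative relations, and your route is cleaner. The paper never introduces the wedge: it proves $x_ix_j=0$ first for $n=2$ by a hands-on algebraic argument --- writing $x_1x_2=ax_1^2+bx_2^2$, using the symmetry swapping the two summands to force $a=b$, and ruling out $a=b=1$ by multiplying by $x_1$ and $x_2$ and contradicting the independence of $x_1^3,x_2^3$ (this is precisely where the hypothesis $k\geqslant 2$ enters) --- and then handles general $n$ by factoring $p_i^{n+1}$ and $p_j^{n+1}$ through a common collapse onto $P_2$. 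You instead factor everything through the pinch map $q$ to $\bigvee_i \mathbbm{R}P^{2k}$, so that $x_ix_j=q^{\ast}(\tilde{x}_i\tilde{x}_j)$ vanishes for the formal reason that cup products of classes supported on distinct wedge summands are zero; this disposes of the relation uniformly in $n$, avoids the $P_2$ computation entirely, and does not even use $k\geqslant 2$. Likewise, for $x_i^{2k}=x_j^{2k}$ you invoke the mod-$2$ degree of $c_i$, whereas the paper proves the corresponding isomorphism on $H^{2k}$ via an excision/quotient diagram and the long exact sequence of the pair $(P_{n+1},\overline{P_n})$ --- same content, but your local-degree argument is shorter. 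The one place where your sketch is lighter than the paper is the verification that $H^m(M_n\setminus D)\cong H^m(M_n)$ for $m\leqslant 2k-1$ (needed to iterate the Mayer--Vietoris step), which the paper spells out via the decomposition $P_n=\overline{P_n}\cup D^{2k}$; this is routine and does not affect correctness.
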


\begin{proof}
  We shall omit the coefficients since it will always be $\mathbbm{Z}_2$.
  
  The additive structure comes easily from the integral homology and Universal
  Coefficient Theorem. We only need to determine the multiplicative structure.
  
  To do this we will proceed by induction.
  
  For inductive purpose we shall prove a stronger version of the above
  theorem.
  
  Denote $\underset{i = 1}{\overset{n}{\#}} P^{2 k}$ by $P_n$, treated as
  $S^{2 k} \underset{i = 1}{\overset{n}{\#}} P^{2 k}$ where all disks cut from
  $S^{2 k}$ have positive distance between each other. For $n = 1$, write $P_1
  = P^{2 k}$ as $P$.
  
  Define a map $p_i^n : P_n \rightarrow P$ by fixing the $i$th copy of $P$
  (with the open disk removed) in $P_n$, mapping an ``annulus'' in $S^{2 k}$
  near the boundary of this disk via radial projection onto the open disk in
  $P$ and sending the remainder onto the center of that disk. Let $x$ be the
  generator of $H^1 (P ; \mathbbm{Z}_2)$. We claim in addition that in the
  Theorem 7, $x_i$ can be chosen as $p_i^{n \ast} (x)$.
  
  The case $n = 1$ is well-known.
  
  For any $n$, let $\overline{P_n}$ be $P_n$ with yet another open disk
  (disjoint with the existing ones) removed from $S^{2 k}$. Denote
  $\overline{P_1}$ as $\bar{P}$. Note that $\bar{P}$ is $P^{2 k}$ with an open
  disk removed.
  
  Now assume that the stronger version of the above theorem holds for $n$
  copies of $P^{2 k}$, i.e., for $P_n$. We shall prove it for $P_{n + 1}$.
  
  By definition, $P_{n + 1} = \overline{P_n} \cup \bar{P}$, $\overline{P_n}
  \cap \bar{P} = S^{2 k - 1}$, where $\overline{P_n}$ corresponds to the first
  $n$ copies of $P$ in $P_{n + 1}$.
  
  From the Mayor-Vietoris Sequence of $P_n = \overline{P_n} \cup D^{2 k}$ and
  using the fact that $H^{2 k} (\overline{P_n}) = 0$ (this is because
  $\overline{P_n}$ is homotopy equivalent to a non-compact $2 k$-manifold),
  one can see that the inclusion $\overline{P_n} \hookrightarrow P_n$ induces
  isomorphisms on $H^m$ for $0 \leqslant m \leqslant 2 k - 1$ and for any $n$.
  
  An argument by M-V sequence with respect to $P_{n + 1} = \overline{P_n} \cup
  \bar{P}$ \ similar to the one above shows that
  \[ H^m (P_{n + 1}) \overset{i_n^{\ast} \oplus i_1^{\ast}}{\longrightarrow}
     H^m (\overline{P_n}) \oplus H^m (\bar{P}) \]
  is an isomorphism for $0 \leqslant m \leqslant 2 k - 1$, where $i_1, i_n$
  are canonical inclusions.
  
  There is a projection $q_n : P_{n + 1} \rightarrow P_n$ (defined similarly
  as $p_i^n$ above) that is identity on $\overline{P_n}$ and maps $\bar{P}$
  onto the disk $D^{2 k}$. It is not hard to show $p_i^{n + 1} = p_i^n \circ
  q_n, 1 \leqslant i \leqslant n$.
  
  Now consider the composition;
  \[ H^m (P_n) \oplus H^m (P)  \overset{q_n^{\ast} \oplus p_{n + 1}^{n + 1
     \ast}}{\longrightarrow} H^m (P_{n + 1}) \overset{i_n^{\ast} \oplus
     i_1^{\ast}}{\longrightarrow} H^m (\overline{P_n}) \oplus H^m (\bar{P}) \]
  for $1 \leqslant m \leqslant 2 k - 1$.
  
  We have proven that the inclusions $i_n \circ q_n$ and $i_1 \circ p_{n +
  1}^{n + 1}$ induce isomorphism on $H^m$. On the other hand, $i_n \circ p_{n
  + 1}^{n + 1}$ and $i_1 \circ q_n$ are null-homotopic, whence $(q_n^{\ast}
  \oplus p_{n + 1}^{n + 1 \ast}) \circ (i_n^{\ast} \oplus i_1^{\ast}) =
  (q_n^{\ast} \circ i_n^{\ast}) \oplus (p_{n + 1}^{n + 1 \ast} \circ
  i_1^{\ast})$ is an isomorphism. We have seen that $i_n^{\ast} \oplus
  i_1^{\ast}$ is an isomorphism, thus the same is true for $q_n^{\ast} \oplus
  p^{n + 1\ast}_{n + 1}$.
  
  Now define $x_i = p_i^{n + 1 \ast} (x) \in H^1 (P_{n + 1}), 1 \leqslant i
  \leqslant n + 1$, then $x_i = q_n^{\ast} \circ p_i^{n \ast} (x)$ for $1
  \leqslant i \leqslant n$.
  
  The inductive assumption implies that for $1 \leqslant m \leqslant 2 k - 1$,
  $\{ p_i^{n \ast} (x)^m = p_i^{n \ast} (x^m), 1 \leqslant i \leqslant n \}$
  is a basis for $H^m (P_n)$.
  
  Since $q_n^{\ast} \oplus p^{n + 1\ast}_{n + 1}$ is an isomorphism, $H^m
  (P_{n + 1})$ has basis $\{ x_1^m, \cdots, x_n^m, x_{n + 1}^m \}$, $1
  \leqslant m \leqslant 2 k - 1$.
  
  Next we turn to dimension $2 k$.
  \\
  
  {\tmstrong{{\underline{Claim}}:}} Both $q_n$ and $p_{n + 1}^{n + 1}$ induce
  isomorphism on $H^{2 k}$.
  \\
  
  {\underline{{\tmstrong{Proof of the claim}}}}: Consider the commutative
  diagram:
  
  \begin{table}[h]
    \begin{tabular}{lll}
      $P_{n + 1}$ & $\overset{p_{n + 1}^{n + 1}}{\longrightarrow}$ &
      \quad$P$\\
      \quad$\longdownarrow$ &  & \quad$\longdownarrow$\\
      $(P_{n + 1}, \overline{P_n})$ & $\overset{p_{n + 1}^{n +
      1}}{\longrightarrow}$ & $(P, D^{2 k})$\\
      \quad$\longdownarrow$ &  & \quad$\longdownarrow$\\
      $(P_{n + 1} / \overline{P_n}, \ast)$ &
      $\overset{\tilde{p}}{\longrightarrow}$ & $(P / D^{2 k}, \ast)$
    \end{tabular}
    \caption{}
  \end{table}
  
  where $\tilde{p}$ is induced by $p_{n + 1}^{n + 1}$ and the vertical maps
  are canonical inclusions or projections. $\tilde{p}$ is a homeomorphism and
  the two lower vertical maps induce isomorphism on cohomology. Consider the
  long exact sequence
  \[ \cdots \longrightarrow H^{2 k + 1} (\overline{P_n}) \longrightarrow H^{2
     k} (P_{n + 1}, \overline{P_n}) \longrightarrow H^{2 k} (P_{n + 1})
     \longrightarrow H^{2 k} (\overline{P_n}) \longrightarrow \cdots \]
  Since $H^{2 k + 1} (\overline{P_n}) = 0 = H^{2 k} (\overline{P_n})$, the
  upper left map in the above diagram induces isomorphism on cohomology.
  Trivially, $P \rightarrow (P, D^{2 k})$ induces isomorphism on $H^{2 k}$.
  
  Combining the above arguments and using commutativity, we have shown that
  $p_{n + 1}^{n + 1} : P_{n + 1} \rightarrow P$ induces an isomorphism on
  $H^{2 k}$.
  
  In much the same way one can show that $q_n$ induces isomorphism on $H^{2
  k}$. This finishes the proof of the claim.
  
  \
  
  The claim together with the inductive assumption implies that $H^{2 k}
  (P_{n + 1})$ is generated by $x_1^{2 k} = x_2^{2 k} = \cdots = x_n^{2 k} =
  x_{n + 1}^{2 k}$.
  
  It remains to show that $x_i x_j = 0, i \neq j, 1 \leqslant i, j \leqslant n
  + 1$.
  
  For the case $n = 1$, let $x_1 x_2 = a x_1^2 + b x_2^2$ (this is because $\{
  x_1^2, x_2^2 \}$ are basis) for some $a, b$. Since one can exchange the role
  of $x_1$ and $x_2$ (by exchanging the two copies of $P$ in $P_2$), we must
  have $a = b$.
  
  Suppose $a = b = 1$, then $x_1^2 x_2 = x_1 (x_1 x_2) = x_1^3 + x_1 x_2^2$,
  whence $x_1^3 = x_1^2 x_2 + x_1 x_2^2$. Similarly $x_2^3 = x_1^2 x_2 + x_1
  x_2^2$. This contradicts to $\{ x_1^3, x_2^3 \}$ being basis.
  
  Consequently $a = b = 0$ and the claim is proven for $n = 1$.
  
  For the case $n > 1$, we decompose $p_i^{n + 1}$, $p_j^{n + 1}$ into
  commutative diagrams:
  
  \
  
  \tmfloat{h}{small}{table}{\begin{tabular}{lll}
    $P_{n + 1}$ &  & \\
    & $\overset{p_{i j}}{\searrow}$ & \\
    $p_i^{n + 1} \longdownarrow$ &  & $P_2$\\
    & $\underset{p_i'}{\swarrow}$ & \\
    $P$ &  & 
  \end{tabular}}{}\tmfloat{h}{small}{table}{\begin{tabular}{lll}
    $P_{n + 1}$ &  & \\
    & $\overset{p_{i j}}{\searrow}$ & \\
    $p_j^{n + 1} \longdownarrow$ &  & $P_2$\\
    & $\underset{p_j'}{\swarrow}$ & \\
    $P$ &  & 
  \end{tabular}}{}
  
  Here $p_{i j}$ preserves the $i$th and $j$th copy of $P$ in $P_n$ while
  project other copies of $P$ onto disks, and $p_i'$ (resp. $p_j'$) preserves
  the $i$th (resp. $j$th) copy of $P$ while projects the other onto respective
  disks.
  
  Then $x_i x_j = p_i^{n + 1 \ast} (x) p_j^{n + 1 \ast} (x) = p_{i j}^{\ast}
  [p_i^{\prime \ast} (x) \cdot p_j^{\prime \ast} (x)] = p_{i j}^{\ast} (0) =
  0$ by the inductive assumption.
  
  This finishes both the inductive step and the proof of Theorem 4.
\end{proof}

\

\begin{flushleft}
{\tmstrong{{\Large References}}}:
\\\

[1] P. Blagojevi{\'c}, V. Gruji{\'c}, R. {\v Z}ivaljevi{\'c}, {\tmem{Symmetric
products of surfaces: A unifying theme for topology and physics}}. Summer
School in Modern Mathematical Physics, SFIN, XV (A3), Institute of Physics,
Belgrade (2002).

[2] A. Borel, {\tmem{Seminar on transformation groups}}, Annals of Mathematics
Studies No. 46, Princeton, 1960.

[3] R. Bott,{\tmem{ On symmetric products}}~and the Steenrod squares, Ann of
Math. (2) 57 (1953), 579--590.

[4] G. Bredon, {\tmem{Introduction to Compact Transformation Groups}}. (Pure
and Applied Mathematics, Vol. 46)

[5] R. Brown,{\tmem{ The Lefschetz}}~Fixed Point Theorem, Scott, Foresman,
Glenview, IL, 1971

[6] R. Brown, {\tmem{The Fixed Point Property and Cartesian Products}}, Amer.
Math. Month. 89, 654--678.

[7] A. Dold, Homotopy of symmetric products and other functors of complexes,
Ann. Math. 68(1958), 54-80.

[8] A. Dold, R. Thom, {\tmem{Quasifaserungen und unendliche symmetrische
produkte}}, Ann. of Math. (2) 67 (1958), 239--281.

[9] E. Fadell, {\tmem{Recent}} {\tmem{results in the fixed point theory of
continuous maps}}. Bull. Amer. Math. Soc. 76 (1970), no. 1, 10--29.

[10] E. Fadell, {\tmem{Some examples in fixed point theory}}. Pacific J. Math.
33 (1970), no. 1, 89--100.

[11] S. Husseini, {\tmem{The product of manifolds with the f.p.p. need not
have the f.p.p.}}, Amer. J. Math. 99 (1977) 919--931.

[12] S. Illman. {\tmem{Smooth equivariant triangulations of G-manifolds for G
a finite group}}. Math. Ann., 233(3):199--220, 1978.

[13] K. Kuratowski, {\tmem{Problem 49}}, Fund. Math. 15 (1930), 356.

[14] W. Lopez, {\tmem{An example in the fixed point theory of polyhedra}},
Bull. Amer. Math. Soc. 73 (1967), 922-924.

[15] H. R. Morton, {\tmem{Symmetric products of the circle}}, Proc. Camb.
Phil. Soc. 63 (1967), pp. 349-352.

\

Slawomir Kwasik

Department of Mathematics

Tulane University

New Orleans, LA, 70118

kwasik@tulane.edu

\

Fang Sun

Department of Mathematics

Tulane University

New Orleans, LA, 70118

fsun@tulane.edu
\end{flushleft}

\end{document}